\numberwithin{equation}{section}
\numberwithin{figure}{section}
\theoremstyle{plain}
\newtheorem{thm}{\protect\theoremname}
\providecommand{\theoremname}{Theorem}
\begin{document}

\title[Landau -- Hadamard type inequality for mappings]{A counterpart of Landau \textendash{} Hadamard type inequality\\ for
manifold-valued mappings}

\author{Igor Parasyuk}

\address{Faculty of Mechanics and Mathematics, Taras Shevchenko National University
of Kyiv, 64/13, Volodymyrska Street, City of Kyiv, Ukraine, 01601 }

\email{pio@univ.kiev.ua}

\subjclass[2010]{26D10; 26D20; 53C21}

\begin{abstract}
We obtain a Landau -- Hadamard type inequality
for mappings defined on the whole real axis and taking values in Riemannian manifolds. In terms of an auxiliary convex function, we find conditions under which the boundedness of  covariant derivative along the curve under consideration ensures the boundedness of the corresponding tangent vector field.  As an example we obtain a Landau -- Hadamard type inequality for curves on 2D unite sphere.
\end{abstract}

\keywords{ Riemannian manifold; covariant derivative; Landau -- Hadamard inequality}

\maketitle

\section{Introducton}

In the present paper, we intend to obtain a counterpart of Landau
\textendash{} Hadamard inequality for curves on Riemannian manifolds.
In what follows we use the following notations:
\begin{itemize}
\item $\left(\mathcal{M},\mathfrak{g}=\left\langle \cdot,\cdot\right\rangle \right)$
is a smooth complete Riemannian manifold with metric tensor $\mathfrak{g}$;
\item $\nabla$ is the Levi-Civita connection with respect to $\mathfrak{g}$;
\item $\rho(\cdot,\cdot):\mathcal{M}\times\mathcal{M}\mapsto\mathbb{R}_{+}$
is the corresponding distance function;
\item $\left\Vert \cdot\right\Vert $ is the norm associated with the inner
product $\left\langle \cdot,\cdot\right\rangle $ on tangent spaces
$T_{x}\mathcal{M}$, $x\in\mathcal{M}$;
\item $\dot{x}(t)\in T_{x(t)}\mathcal{M}$ is the tangent vector for the
mapping ${x}(\cdot)$ at point $x(t)$
\item $\nabla_{\dot{x}}\xi(t)$ is the covariant derivative of a smooth
vector field $\xi(\cdot):I\mapsto T\mathcal{M}:=\bigsqcup_{x\in\mathcal{M}}T_{x}\mathcal{M}$
along a smooth mapping $x(\cdot):I\mapsto\mathcal{M}$;
\item $\nabla_{\xi}X(x)$ is the covariant derivative of a vector field
$X$ at point $x\in\mathcal{M}$ in direction of a tangent vector
$\xi\in T_{x}\mathcal{M}$,
\item $\nabla U(x)$ is the gradient of a smooth function $U(\cdot):\mathcal{M}\mapsto\mathbb{R}$
at point $x\in\mathcal{M}$.
\end{itemize}
When studying on a Riemnnian manifold the existence problem for bounded
solutions to Newtonian equation
\begin{gather*}
\nabla_{\dot{x}}\dot{x}=F(x)
\end{gather*}
with smooth force field $F$, there naturally arises the question:
is it true that the boundedness of a solution $x(\mathbb{\cdot}):\mathbb{R}\mapsto\mathcal{M}$,
and thus the finiteness of $\sup_{t\in\mathbb{R}}\left\Vert \nabla_{\dot{x}}\dot{x}(t)\right\Vert $,
always ensures the boundedness of $\left\Vert \dot{x}(\cdot)\right\Vert $
on $\mathbb{R}$ (see, e.g., \cite{Par17})? As was observed in \cite{Cie03},
in the case where $\mathcal{M}=\mathbb{R}^{d}$, the answer is positive
in view of the Landau\textendash{}Hadamard inequality (see, e.g.,\cite{MPF91,Finch03}).
Recall that if a function $f(\cdot)\in\mathrm{C}^{2}\left(\mathbb{R}\!\mapsto\!\mathbb{R}\right)$
satisfies the conditions
\begin{gather*}
\left\Vert f(\cdot)\right\Vert _{\infty}:=\sup_{t\in\mathbb{R}}\left|f(t)\right|<\infty,\quad\left\Vert f^{(2)}(\cdot)\right\Vert _{\infty}<\infty,
\end{gather*}
where $\left\Vert \cdot\right\Vert _{\infty}:=\sup_{t\in\mathbb{R}}\left\Vert \cdot\right\Vert $, then the Landau \textendash{} Hadamard inequality reads
\begin{gather*}
\left\Vert f^{\prime}(\cdot)\right\Vert _{\infty}^{2}\le2\left\Vert f(\cdot)\right\Vert _{\infty}\left\Vert f^{\prime\prime}(\cdot)\right\Vert _{\infty}.
\end{gather*}

G.~A.~Anastassiou \cite{Anas12,Anas16} studied the case of Banach
space valued functions and showed that the analogous inequality holds
true, although with constant $4$ instead of $2$.

It is not hard to construct an example of curve on a sphere with bounded
$\left\Vert \nabla_{\dot{x}}\dot{x}(\cdot)\right\Vert _{\infty}$
and unbounded $\left\Vert \dot{x}(\cdot)\right\Vert _{\infty}$.

In order to obtain a counterpart of Landau \textendash{} Hadamard
inequality we use an auxiliary function with positive definite Hessian
\cite{Par17_1}. In the case of Riemannian manifold, such a function
can be considered as a surrogate for the square of norm in $\mathbb{R}^{d}$.

\section{A Landau \textendash{} Hadamard type inequality on Riemannian manifold\label{sec:Convexfunct}}

We obtain the following estimate for $\dot{x}(\cdot)$ in terms of an
auxiliary function $U(\cdot)$ and the covariant derivative $\nabla_{\dot{x}}x(\cdot)$.
\begin{thm}
\label{thm:1}Let $x(\cdot):\mathbb{R}\mapsto\mathcal{M}$ be a smooth
mapping such that $\left\Vert \nabla_{\dot{x}}\dot{x}(\cdot)\right\Vert _{\infty}<\infty.$
Suppose that there exists a smooth function $U(\cdot):\mathcal{M}\mapsto\mathbb{R}$
satisfying the inequalities
\begin{gather*}
\sup_{t\in\mathbb{R}}U\circ x(t)<\infty,\quad0<\left\Vert \nabla U\circ x(\cdot)\right\Vert _{\infty}<\infty,\\
\lambda:=\inf_{t\in\mathbb{R}}\min\left\{ \left\langle \nabla_{\xi}\nabla U(x(t)),\xi\right\rangle :\xi\in T_{x(t)}\mathcal{M},\;\left\Vert \xi\right\Vert =1\right\} >0.
\end{gather*}
Then
\begin{gather*}
\left\Vert \dot{x}(\cdot)\right\Vert _{\infty}^{2}\le\frac{C^{2}}{\lambda}\left\Vert \nabla U\circ x(\cdot)\right\Vert _{\infty}\left\Vert \nabla_{\dot{x}}\dot{x}(\cdot)\right\Vert _{\infty}
\end{gather*}
 where the constant $C$ does not exceed the positive root of the
equation $\zeta^{3}-3\zeta-1=0.$ \end{thm}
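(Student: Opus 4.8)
The plan is to reduce the statement to a one-dimensional problem for the scalar function $u(t):=U(x(t))$ and the speed $v(t):=\|\dot x(t)\|$, and then to run a two-stage argument: first obtain an a priori bound on $\dot u$, then bootstrap it into a bound on $v$. Put $A:=\|\nabla U\circ x(\cdot)\|_\infty$ and $D:=\|\nabla_{\dot x}\dot x(\cdot)\|_\infty$; the degenerate cases $D=0$ or $v\equiv 0$ are immediate, so assume $A,D>0$. I would start by recording three elementary facts. Since $\dot u(t)=\langle\nabla U(x(t)),\dot x(t)\rangle$, Cauchy--Schwarz gives $|\dot u(t)|\le A\,v(t)$. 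Differentiating once more and using compatibility of $\nabla$ with $\mathfrak{g}$,
\[
\ddot u(t)=\langle\nabla_{\dot x(t)}\nabla U(x(t)),\dot x(t)\rangle+\langle\nabla U(x(t)),\nabla_{\dot x}\dot x(t)\rangle\ \ge\ \lambda\,v(t)^2-AD ,
\]
the first summand being $\mathrm{Hess}\,U$ evaluated on $\dot x(t)$, hence $\ge\lambda v(t)^2$ by hypothesis, the second being $\ge -AD$. Finally, since the covariant derivative along $x(\cdot)$ of the vector field $t\mapsto\dot x(t)$ has norm $\le D$ and parallel transport is an isometry, $v$ is $D$-Lipschitz, so $v(t_0+r)\ge v(t_0)-D|r|$ for all $t_0,r$.

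Stage one is a Riccati-type a priori bound on $\dot u$. From $|\dot u|\le Av$ we get $v^2\ge\dot u^2/A^2$, so the two inequalities above fuse into the closed scalar inequality $\ddot u\ge(\lambda/A^2)\dot u^2-AD$. If $\dot u(t_1)>\sqrt{A^3D/\lambda}$ for some $t_1$, then $\ddot u(t_1)>0$; consequently $\dot u$ stays above this threshold and keeps increasing for $t>t_1$, where $\ddot u\ge c\,\dot u^2$ for a constant $c>0$, so that comparison with $\dot y=cy^2$ forces $\dot u$, and hence $u$, to blow up in finite time, contradicting $\sup_t u(t)<\infty$. The case $\dot u(t_1)<-\sqrt{A^3D/\lambda}$ reduces to this one via the reflection $t\mapsto -t$, which preserves every hypothesis (the covariant acceleration, the boundedness of $U\circ x$, the gradient norm and the Hessian bound are all even under time reversal). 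Hence $\|\dot u(\cdot)\|_\infty\le A\sqrt{AD/\lambda}$.

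Stage two converts this into the bound on $v$. Fix $t_0$ and set $V:=v(t_0)$; if $V<\sqrt{AD/\lambda}$ there is nothing to prove, so assume $V\ge\sqrt{AD/\lambda}$, and, after a reflection if necessary, assume $\dot u(t_0)\ge0$. The Lipschitz bound gives $\ddot u(t_0+r)\ge\lambda(V-Dr)^2-AD$ for $0\le r\le V/D$; integrating from $t_0$ to $t_0+s$ and discarding the nonnegative term $\dot u(t_0)$,
\[
\dot u(t_0+s)\ \ge\ \frac{\lambda}{3D}\bigl(V^3-(V-Ds)^3\bigr)-ADs ,\qquad 0\le s\le V/D .
\]
Comparing with the a priori bound $\dot u(t_0+s)\le A\sqrt{AD/\lambda}$ and choosing $s$ so that $V-Ds=\sqrt{AD/\lambda}$ — precisely the value of $s$ maximizing the right-hand side over the admissible interval, and admissible because $V\ge\sqrt{AD/\lambda}$ — the inequality simplifies, upon writing $c_0:=V/\sqrt{AD/\lambda}$, to $c_0^3-3c_0-1\le0$. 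Since $\zeta\mapsto\zeta^3-3\zeta-1$ is negative on $[0,\zeta_0)$ and positive on $(\zeta_0,\infty)$, where $\zeta_0$ is its unique nonnegative root, this forces $c_0\le\zeta_0$, i.e. $v(t_0)^2\le\zeta_0^2\,AD/\lambda$. As $t_0$ is arbitrary, $\|\dot x(\cdot)\|_\infty^2\le(\zeta_0^2/\lambda)\|\nabla U\circ x(\cdot)\|_\infty\|\nabla_{\dot x}\dot x(\cdot)\|_\infty$, which is the assertion with $C=\zeta_0$.

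The main obstacle is the two-stage logical structure itself: the hypothesis $\sup_t U(x(t))<\infty$ does not control $v$ directly — a pointwise convexity estimate is useless, since large $\ddot u$ at isolated instants does not make $u$ unbounded — it only controls $\dot u$, through the Riccati blow-up mechanism; one must then spend the uniform positivity of the Hessian together with the Lipschitz continuity of $v$ to transfer that control back to $v$. Getting the sharp cubic $\zeta^3-3\zeta-1$ rather than a cruder one rests on combining two sharpenings pointing the same way: using the sign of $\dot u(t_0)$ to drop it as a nonnegative term (so that the integration is genuinely one-sided), and calibrating the length $s$ of the integration so that $V-Ds$ lands exactly on the a priori threshold $\sqrt{AD/\lambda}$.
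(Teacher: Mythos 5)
Your proof is correct, and its first stage (the Riccati-type bound $\ddot u\ge(\lambda/A^2)\dot u^2-AD$ forcing $\|\dot u\|_\infty\le\sqrt{A^3D/\lambda}$ via blow-up of $u$) coincides with the paper's, which treats the case $\dot u(t_0)<-\sqrt{A^3D/\lambda}$ by a direct backward integration rather than your time reversal \textemdash{} a cosmetic difference. The second stage is where you genuinely diverge in execution while using the same two ingredients (the Hessian lower bound $\ddot u\ge\lambda\|\dot x\|^2-AD$ and the $D$-Lipschitz property of $\|\dot x\|$). The paper fixes $\varepsilon>0$, isolates a maximal excursion interval $[t_1,t_2]$ on which $\|\dot x\|^2>(AD+\varepsilon)/\lambda$ with equality at the endpoints (which requires a separate argument that such finite endpoints exist), introduces the antiderivative $I(z)=z^3/3-z_\varepsilon^2z+2z_\varepsilon^3/3$ satisfying $|\tfrac{\mathrm{d}}{\mathrm{d}t}I(z(t))|\le\tfrac{r_2}{\lambda}\dot v(t)$, integrates from both endpoints toward an interior $t$, and finally lets $\varepsilon\to0$. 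You instead fix an arbitrary $t_0$ with $V=\|\dot x(t_0)\|\ge\sqrt{AD/\lambda}$, normalize $\dot u(t_0)\ge0$ by reflection, propagate the Lipschitz lower bound $\|\dot x(t_0+r)\|\ge V-Dr$ into $\ddot u$, and integrate one-sidedly up to the calibrated time where $V-Ds=\sqrt{AD/\lambda}$; the resulting polynomial $\tfrac13(V^3-W^3)-W^2(V-W)\le W^3$ is algebraically identical to the paper's $I(z(t))\le z_\varepsilon^3$ and yields the same cubic $\zeta^3-3\zeta-1\le0$. Your version buys a pointwise bound at every $t_0$ directly, dispensing with the excursion-interval construction, the existence argument for its endpoints, and the $\varepsilon$-limit; the paper's version avoids the sign normalization and keeps the integration symmetric. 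Both are sound and give the same constant $C$.
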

\begin{proof}
In what follows we will use the notations
\begin{gather*}
u(t):=U\circ x(t),\quad v(t):=\dot{u}(t)\equiv\left\langle \nabla U(x(t)),\dot{x}(t)\right\rangle ,\\
r_{0}:=\left\Vert \nabla U\circ x(\cdot)\right\Vert _{\infty},r_{2}:=\left\Vert \nabla_{\dot{x}}\dot{x}(\cdot)\right\Vert _{\infty}.
\end{gather*}
Since $\left|v(t)\right|\le r_{0}\left\Vert \dot{x}(t)\right\Vert $,
then
\begin{gather*}
\begin{split}\dot{v}(t) & =\left\langle \nabla_{\dot{x}}H(x(t)),\dot{x}(t)\right\rangle +\left\langle \nabla U(x(t)),\nabla_{\dot{x}}\dot{x}(t)\right\rangle \\
 & \ge\lambda\left\Vert \dot{x}(t)\right\Vert ^{2}-r_{0}r_{2}\ge\frac{\lambda}{r_{0}^{2}}v^{2}(t)-r_{0}r_{2}.
\end{split}
\end{gather*}
Let us show that $v^{2}(t)\le r_{0}^{3}r_{2}/\lambda$ for all $t\in\mathbb{R}$.
In fact, if there exists $t_{0}$ such that $v(t_{0})>\sqrt{r_{0}^{3}r_{2}/\lambda}$,
then $v(t)$ increases for $t\ge t_{0}$, and $\dot{v}(t)\ge\lambda v^{2}(t_{0})/r_{0}^{2}-r_{0}r_{2}>0$.
Thus $v(t)\to+\infty$ and we arrive at contradiction: $u(t)\to+\infty$
as $t\to+\infty$. Now suppose that there exists $t_{0}$ such that
$v(t_{0})<-\sqrt{r_{0}^{3}r_{2}/\lambda}$. Then $v(t)$ increases
for $t\le t_{0}$, and we obtain
\begin{gather*}
v(t)\le v(t_{0})<0,\quad\dot{v}(t)\ge\lambda v^{2}(t_{0})/r_{0}^{2}-r_{0}r_{2}>0\quad\forall t\le t_{0}.
\end{gather*}
This yields
\begin{gather*}
\intop_{t}^{t_{0}}\dot{v}(s)\mathrm{d}s\ge\left[\lambda v^{2}(t_{0})/r_{0}^{2}-r_{0}r_{2}\right](t_{0}-t)
\end{gather*}
 and as a consequence
\begin{alignat*}{1}
v(t) & \le v(t_{0})+\left[v^{2}(t_{0})/r_{0}^{2}-r_{0}r_{2}\right](t-t_{0})\to-\infty,\quad t\to-\infty,\\
u(t) & =u(t_{0})-\intop_{t}^{t_{0}}v(s)\mathrm{d}s\ge u(t_{0})-v(t_{0})(t_{0}-t)\to+\infty,\quad t\to-\infty.
\end{alignat*}
We again arrive at contradiction.

Observe that if for some $\varepsilon>0$ there exists a segment $[t_{1},t_{2}]$
where $\left\Vert \dot{x}(t)\right\Vert ^{2}\ge\left(r_{0}r_{2}+\varepsilon\right)/\lambda$,
then $\dot{v}(t)>\varepsilon$, and the inequality $v(t_{2})\ge v(t_{1})+\varepsilon(t_{2}-t_{1})$
yields
\begin{gather*}
t_{2}-t_{1}\le\frac{2\sqrt{r_{0}^{3}r_{2}/\lambda}}{\varepsilon}.
\end{gather*}
 Hence, for any $\varepsilon>0$ and any $T>0$ there exists $t_{\varepsilon}<-T$
such that $\left\Vert \dot{x}(t_{\varepsilon})\right\Vert ^{2}<\left(r_{0}r_{2}+\varepsilon\right)/\lambda$.
Now it remains to estimate $\left\Vert \dot{x}(t)\right\Vert $ on
the segment $[t_{1},t_{2}]$ such that $\left\Vert \dot{x}(t_{i})\right\Vert ^{2}=\left(r_{0}r_{2}+\varepsilon\right)/\lambda$
and $\left\Vert \dot{x}(t)\right\Vert ^{2}>\left(r_{0}r_{2}+\varepsilon\right)/\lambda$
for all $t\in(t_{1},t_{2})$.

On account that
\begin{gather*}
2\left\Vert \dot{x}(t)\right\Vert \left|\frac{\mathrm{d}\left\Vert \dot{x}(t)\right\Vert }{\mathrm{d}t}\right|=\left|\frac{\mathrm{d}}{\mathrm{d}t}\left\Vert \dot{x}(t)\right\Vert ^{2}\right|=2\left|\left\langle \dot{x}(t),\nabla_{\dot{x}}\dot{x}(t)\right\rangle \right|\le2r_{2}\left\Vert \dot{x}(t)\right\Vert
\end{gather*}
on any interval where $\dot{x}(t)\ne0$, we obtain
\begin{gather*}
\left|\frac{\mathrm{d}\left\Vert \dot{x}(t)\right\Vert }{\mathrm{d}t}\right|\le r_{2}\quad\forall t\in(t_{1},t_{2}).
\end{gather*}
Set $z(t):=\left\Vert \dot{x}(t)\right\Vert $, $z_{\varepsilon}:=\sqrt{\left(r_{0}r_{2}+\varepsilon\right)/\lambda}$.
Then
\begin{gather*}
\dot{v}(t)\ge\lambda\left[\left\Vert \dot{x}(t)\right\Vert ^{2}-r_{0}r_{2}/\lambda\right]\ge\lambda\left[z^{2}(t)-z_{\varepsilon}^{2}\right]
\end{gather*}
and thus
\begin{gather}
\left|\left[z^{2}(t)-z_{\varepsilon}^{2}\right]\frac{\mathrm{d}z(t)}{\mathrm{d}t}\right|\le r_{2}\left[z^{2}(t)-z_{\varepsilon}^{2}\right]\le\frac{r_{2}}{\lambda}\dot{v}(t)\quad\forall t\in(t_{1},t_{2}).\label{eq:r_2/ldot_v}
\end{gather}
If we define
\[
I(z):=\frac{z^{3}}{3}-z_{\varepsilon}^{2}z+\frac{2z_{\varepsilon}^{3}}{3},
\]
then~one can rewrite(\ref{eq:r_2/ldot_v}) in the form
\begin{gather*}
-\frac{r_{2}}{\lambda}\dot{v}(t)\le\frac{\mathrm{d}}{\mathrm{d}t}I(z(t))\le\frac{r_{2}}{\lambda}\dot{v}(t)\quad\forall t\in(t_{1},t_{2}).
\end{gather*}
From this it follows that
\begin{gather*}
\begin{split}2z_{\varepsilon}^{3} & \ge2\sqrt{r_{0}^{3}r_{2}^{3}/\lambda^{3}}\ge\frac{r_{2}}{\lambda}\left[v(t_{2})-v(t_{1})\right]\\
 & =\frac{r_{2}}{\lambda}\intop_{t_{1}}^{t_{2}}\dot{v}(s)\mathrm{d}s=\frac{r_{2}}{\lambda}\intop_{t_{1}}^{t}\dot{v}(s)\mathrm{d}s+\frac{r_{2}}{\lambda}\intop_{t}^{t_{2}}\dot{v}(s)\mathrm{d}s\\
 & =\intop_{t_{1}}^{t}\frac{\mathrm{d}}{\mathrm{d}s}I(z(s))\mathrm{d}s-\intop_{t}^{t_{2}}\frac{\mathrm{d}}{\mathrm{d}s}I(z(s))\mathrm{d}s=2I(z(t)).
\end{split}
\end{gather*}
\begin{alignat*}{1}
\end{alignat*}
Hence,
\begin{gather*}
\frac{z^{3}(t)}{3}-z_{\varepsilon}^{2}z(t)+\frac{2z_{\varepsilon}^{3}}{3}\le z_{\varepsilon}^{3}\quad\forall t\in(t_{1},t_{2}).
\end{gather*}
Introducing the new variable $\zeta=z/z_{\varepsilon}$, we obtain
\begin{gather*}
\zeta^{3}(t)-3\zeta(t)-1\le0\quad\forall t\in(t_{1},t_{2}),
\end{gather*}
and finally, by letting $\varepsilon$ tend to zero,
\begin{gather*}
z(t)\le C\sqrt{r_{0}r_{2}/\lambda}\quad\forall t\in(t_{1},t_{2}).
\end{gather*}

\end{proof}

\section{Inequality for curves on the unit sphere }

Consider a curve $\mathbf{x}(\cdot)\in\mathrm{C}^{2}\left(\mathbb{R}\!\mapsto\!\mathbb{R}^{3}\right)$
such that $\mathbf{x}(t)\in\mathbb{S}^{2}$:=$\left\{ \mathbf{x}\in\mathbb{R}^{3}:\left\Vert \mathbf{x}\right\Vert =1\right\} $
for all $t\in\mathbb{R}$. Let
\begin{gather}
\mathbf{e}\in\mathrm{argmin\left\{ \left\Vert \mathbf{x}-\mathbf{x}(\cdot)\right\Vert _{\infty}:\mathbf{x}\in\mathbb{S}^{2}\right\} }.\label{eq:Ch-c}
\end{gather}

Define $U(\mathbf{x}):=\left\Vert \mathbf{x}-\mathbf{e}\right\Vert ^{2}/2$.
Then for $\quad\mathbf{x}\in\mathbb{S}^{2}$ we have
\begin{gather*}
\nabla U(\mathbf{x})=\mathbf{x}-\mathbf{e}-\left\langle \mathbf{x}-\mathbf{e},\mathbf{x}\right\rangle \mathbf{x}=\left\langle \mathbf{e},\mathbf{x}\right\rangle \mathbf{x}-\mathbf{e},\\
\left\Vert \nabla U(\mathbf{x})\right\Vert ^{2}=\left\langle \left\langle \mathbf{e},\mathbf{x}\right\rangle \mathbf{x}-\mathbf{e},\left\langle \mathbf{e},\mathbf{x}\right\rangle \mathbf{x}-\mathbf{e}\right\rangle =1-\left\langle \mathbf{e},\mathbf{x}\right\rangle ^{2}
\end{gather*}
Since a naturally parametrized geodesic on $\mathbb{S}^{2}$ starting
at $\mathbf{x}_{0}\in\mathbb{S}^{2}$ in direction of a vector $\mathbf{y}\in T_{\mathbf{x}_{0}}\mathbb{S}^{2}$
is a solution $\mathbf{x=\mathbf{g}}(t)$ of initial problem$ $ $ $
\begin{gather*}
\ddot{\mathbf{x}}+\left\langle \dot{\mathbf{x}},\dot{\mathbf{x}}\right\rangle \mathbf{x}=0,\quad\mathbf{x}(0)=\mathbf{x}_{0},\quad\dot{\mathbf{x}}(0)=\mathbf{y},
\end{gather*}
then
\begin{alignat*}{1}
\left\langle \nabla_{\mathbf{y}}\nabla U(\mathbf{x}_{0}),\mathbf{y}\right\rangle  & =\frac{\mathrm{d^{2}}}{\mathrm{d}t^{2}}\biggl|_{t=0}U(\mathbf{g}(t))\\
 & =\left[\left\langle \mathbf{x}-\mathbf{e},\ddot{\mathbf{x}}\right\rangle +\left\langle \dot{\mathbf{x}},\dot{\mathbf{x}}\right\rangle \right]_{\mathbf{x}=\mathbf{g}(t)}\bigl|_{t=0}=\left\langle \mathbf{e},\mathbf{x}_{0}\right\rangle .
\end{alignat*}

Applying Theorem~\ref{thm:1} we obtain the following result.
\begin{thm}
Let $\mathbf{x}(\cdot)\in\mathrm{C}^{2}\left(\mathbb{R}\!\mapsto\!\mathbb{S}^{2}\right)$
and let $\mathbf{e}$ is defined by~(\ref{eq:Ch-c}). Suppose that
\begin{gather*}
\lambda:=\inf_{t\in\mathbb{R}}\left\langle \mathbf{x}(t),\mathbf{e}\right\rangle >0.
\end{gather*}
 Then
\begin{gather*}
\left\Vert \dot{\mathbf{x}}(\cdot)\right\Vert _{\infty}\le\frac{C^{2}}{\lambda}\left\Vert \sqrt{1-\left\langle \mathbf{e},\mathbf{x}(\cdot)\right\rangle ^{2}}\right\Vert _{\infty}\left\Vert \nabla_{\dot{\mathbf{x}}}\dot{\mathbf{x}}(\cdot)\right\Vert _{\infty}\le\frac{C^{2}\sqrt{1-\lambda^{2}}}{\lambda}\left\Vert \nabla_{\dot{x}}\dot{\mathbf{x}}(\cdot)\right\Vert _{\infty}\end{gather*}
where $C$ is defined in Theorem~(\ref{thm:1}).
\end{thm}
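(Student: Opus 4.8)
The plan is to apply Theorem~\ref{thm:1} to $\mathbf{x}(\cdot)$ viewed as a curve in the complete Riemannian manifold $\mathcal{M}=\mathbb{S}^{2}$ (equipped with the metric induced from $\mathbb{R}^{3}$), taking as auxiliary function the restriction to $\mathbb{S}^{2}$ of $U(\mathbf{x})=\left\Vert \mathbf{x}-\mathbf{e}\right\Vert ^{2}/2$; this $U$ is smooth, and since the proof of Theorem~\ref{thm:1} differentiates the curve only twice, the $\mathrm{C}^{2}$ regularity assumed here is sufficient. It then remains to check the three hypotheses on $U$ and to simplify the resulting estimate.

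First I would clear away the degenerate cases. If $\mathbf{x}(\cdot)\equiv\mathbf{e}$, then $\dot{\mathbf{x}}(\cdot)\equiv 0$ and both sides of the asserted inequality vanish; so assume $\mathbf{x}(\cdot)\not\equiv\mathbf{e}$, that is, $\langle\mathbf{e},\mathbf{x}(t_{*})\rangle<1$ for some $t_{*}$ (note $\langle\mathbf{e},\mathbf{x}(t)\rangle\le 1$ always, by Cauchy--Schwarz). By the identity $\left\Vert \nabla U(\mathbf{x})\right\Vert ^{2}=1-\langle\mathbf{e},\mathbf{x}\rangle^{2}$ obtained above, the middle factor $\left\Vert \sqrt{1-\langle\mathbf{e},\mathbf{x}(\cdot)\rangle^{2}}\right\Vert _{\infty}=\left\Vert \nabla U\circ\mathbf{x}(\cdot)\right\Vert _{\infty}$ is then strictly positive, so if $r_{2}:=\left\Vert \nabla_{\dot{\mathbf{x}}}\dot{\mathbf{x}}(\cdot)\right\Vert _{\infty}=\infty$ the right-hand side is $+\infty$ and there is nothing to prove; hence we may also assume $r_{2}<\infty$, which is the standing hypothesis of Theorem~\ref{thm:1}.

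Next I would verify the hypotheses of Theorem~\ref{thm:1} using the formulas already computed. From $\left\Vert \mathbf{x}(t)-\mathbf{e}\right\Vert \le 2$ one gets $\sup_{t}U\circ\mathbf{x}(t)\le 2<\infty$. From $\left\Vert \nabla U(\mathbf{x})\right\Vert ^{2}=1-\langle\mathbf{e},\mathbf{x}\rangle^{2}$ together with $\lambda\le\langle\mathbf{e},\mathbf{x}(t)\rangle\le 1$ one gets $0\le\left\Vert \nabla U\circ\mathbf{x}(t)\right\Vert ^{2}\le 1-\lambda^{2}\le 1$, and combining this with the choice of $t_{*}$ made above, $0<\left\Vert \nabla U\circ\mathbf{x}(\cdot)\right\Vert _{\infty}<\infty$. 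Finally, the computation preceding the statement shows that for every $\mathbf{x}_{0}\in\mathbb{S}^{2}$ and every unit $\xi\in T_{\mathbf{x}_{0}}\mathbb{S}^{2}$ the Hessian form of $U$ satisfies $\langle\nabla_{\xi}\nabla U(\mathbf{x}_{0}),\xi\rangle=\langle\mathbf{e},\mathbf{x}_{0}\rangle$; in particular it is independent of the direction $\xi$, so the quantity called $\lambda$ in Theorem~\ref{thm:1} is exactly $\inf_{t\in\mathbb{R}}\langle\mathbf{e},\mathbf{x}(t)\rangle$, the $\lambda$ of the present statement, which is positive by hypothesis.

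All hypotheses being met, Theorem~\ref{thm:1} gives
\begin{gather*}
\left\Vert \dot{\mathbf{x}}(\cdot)\right\Vert _{\infty}^{2}\le\frac{C^{2}}{\lambda}\left\Vert \nabla U\circ\mathbf{x}(\cdot)\right\Vert _{\infty}\left\Vert \nabla_{\dot{\mathbf{x}}}\dot{\mathbf{x}}(\cdot)\right\Vert _{\infty}\\
=\frac{C^{2}}{\lambda}\left\Vert \sqrt{1-\langle\mathbf{e},\mathbf{x}(\cdot)\rangle^{2}}\right\Vert _{\infty}\left\Vert \nabla_{\dot{\mathbf{x}}}\dot{\mathbf{x}}(\cdot)\right\Vert _{\infty},
\end{gather*}
which is the first claimed inequality, and the second follows at once from $1-\langle\mathbf{e},\mathbf{x}(t)\rangle^{2}\le 1-\lambda^{2}$ for all $t$, noted above. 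I do not anticipate any real obstacle here: the argument is entirely a matter of matching the hypotheses of Theorem~\ref{thm:1} to the identities $\left\Vert \nabla U(\mathbf{x})\right\Vert ^{2}=1-\langle\mathbf{e},\mathbf{x}\rangle^{2}$ and $\langle\nabla_{\xi}\nabla U(\mathbf{x}_{0}),\xi\rangle=\langle\mathbf{e},\mathbf{x}_{0}\rangle$ already established. The only points needing a word of care are excluding the degenerate curve $\mathbf{x}\equiv\mathbf{e}$ so that $\left\Vert \nabla U\circ\mathbf{x}(\cdot)\right\Vert _{\infty}>0$ as Theorem~\ref{thm:1} demands, and observing that the direction-independence of the Hessian is precisely what identifies the two roles of $\lambda$.
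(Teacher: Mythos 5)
Your proposal is correct and takes essentially the same route as the paper: the paper's entire proof consists of the computations of $\nabla U(\mathbf{x})=\langle\mathbf{e},\mathbf{x}\rangle\mathbf{x}-\mathbf{e}$, $\Vert\nabla U(\mathbf{x})\Vert^{2}=1-\langle\mathbf{e},\mathbf{x}\rangle^{2}$ and $\langle\nabla_{\xi}\nabla U(\mathbf{x}_{0}),\xi\rangle=\langle\mathbf{e},\mathbf{x}_{0}\rangle$ followed by a direct appeal to Theorem~\ref{thm:1}, and you do the same, with the welcome extra care of excluding $\mathbf{x}\equiv\mathbf{e}$ and the case $r_{2}=\infty$. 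The one point worth flagging is that Theorem~\ref{thm:1} yields $\Vert\dot{\mathbf{x}}(\cdot)\Vert_{\infty}^{2}$ on the left-hand side while the statement as printed has $\Vert\dot{\mathbf{x}}(\cdot)\Vert_{\infty}$ without the square (an apparent typo in the paper), so your final display, which correctly carries the exponent $2$, is not literally ``the first claimed inequality'' as written.
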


\section{Concluding remarks}
\begin{enumerate}
\item It seems natural to use for the auxiliary function $U(\cdot)$$ $ the
function $\rho^{2}(x_{\ast},\cdot)/2$ where $x_{\ast}$ is an element
of Chebyshev center for a given curve $x(\cdot)$.
\item It is still unclear whether our approach is applicable to the case
where $\mathbb{R}$ is replaced by a finite interval or by $\mathbb{R}_{+}$.
\item In the present paper, we leave aside the question of how close is
constant $C\approx1.87939$ to the best possible.
\item Let $I=\mathbb{R}$ or $I=\mathbb{R}_{+}$, and let $k,n\in\mathbb{N}$,
$1\le k<n$. It is known (see, e.g.,\cite{MPF91,Finch03}) that there
exist positive constants $C(I,n,k)$ such that if a function $f(\cdot)\in\mathrm{C}^{n}\left(I\!\mapsto\!\mathbb{R}\right)$
satisfies the boundedness conditions
\begin{gather*}
\left\Vert f(\cdot)\right\Vert _{\infty,I}:=\sup_{t\in I}\left|f(t)\right|<\infty,\quad\left\Vert f^{(n)}(\cdot)\right\Vert _{\infty,I}<\infty,
\end{gather*}
then
\begin{gather*}
\left\Vert f^{(k)}(\cdot)\right\Vert _{\infty,I}\le C(I,n,k)\left\Vert f(\cdot)\right\Vert _{\infty,I}^{1-k/n}\left\Vert f^{(n)}(\cdot)\right\Vert _{\infty,I}^{k/n}.
\end{gather*}
In this context, there arises a natural question whether it is possible
to obtain the inequalities of the form
\begin{gather*}
\left\Vert \nabla_{\dot{x}}^{k-1}\dot{x}(\cdot)\right\Vert _{\infty,I}\le K(I,n,k)\left\Vert \rho(x_{\ast},x(\cdot))\right\Vert _{\infty,I}^{1-k/n}\left\Vert \nabla_{\dot{x}}^{n-1}\dot{x}(\cdot)\right\Vert _{\infty,I}^{k/n}
\end{gather*}
for bounded mappings $x(\cdot):I\mapsto\mathcal{M}$ with bounded
$(n-1)$-iterate of covariant derivative $\nabla_{\dot{x}}$ .
\end{enumerate}

\subsection*{Acknowledgements }

This work was partially supported by the Ministry of Education and
Science of Ukraine {[}project 0116U004752{]}.

\end{document}